\documentclass[12pt,a4paper,reqno]{amsart}
\usepackage{amsmath}
\usepackage{amsfonts}
\usepackage{amssymb,amsthm,amsfonts,amsthm,latexsym,enumerate,url,cases}
\numberwithin{equation}{section}
\usepackage{mathrsfs}
     \addtolength{\textwidth}{3 truecm}
     \addtolength{\textheight}{1 truecm}
     \setlength{\voffset}{-.6 truecm}
     \setlength{\hoffset}{-1.3 truecm}

\newtheorem{theorem*}{Theorem}
\newtheorem{lemma*}{Lemma}
\theoremstyle{plain}
\newtheorem{theorem}{Theorem}
\newtheorem{lemma}[theorem]{Lemma}
\newtheorem{corollary}[theorem]{Corollary}

\theoremstyle{definition}

\makeatletter
\makeatother

\begin{document}

\title
[{On a conjecture of Erd\H{o}s}] {On a conjecture of Erd\H{o}s}

\author
[Yong-Gao Chen and Yuchen Ding] {Yong-Gao Chen and Yuchen Ding*}

\address{(Yong-Gao Chen) School of Mathematical Science,
Nanjing Normal University, Nanjing 210023, People's Republic of
China} \email{ygchen@njnu.edu.cn}
\address{(Yuchen Ding) School of Mathematical Science,  Yangzhou University, Yangzhou 225002, People's Republic of China}
\email{ycding@yzu.edu.cn}
\thanks{*Corresponding author}

\keywords{Representation function; Primes}
\subjclass[2010]{Primary 11A41; Secondary 11A67.}

\begin{abstract}  Let $\mathcal{P}$ denote the set of all primes.  In
1950, P. Erd\H os conjectured that if $c$ is an arbitrarily given
constant, $x$ is sufficiently large and $a_1,\dots , a_t$ are
positive integers with
$$a_1<a_2<\cdot\cdot\cdot<a_t\leqslant x,~t>\log x,$$
then there exists an integer $n$ so that the number of solutions
of $n=p+a_i$ $(p\in \mathcal{P}, 1\le i\le t)$ is greater than
$c$. In this note, we confirm this old conjecture of Erd\H{o}s.
\end{abstract}
\maketitle

\baselineskip 18pt

\section{Introduction}

Let $\mathcal{P}$ denote the set of all primes.  In 1950,
Erd\H{o}s \cite{Er1} made the following anecdotal conjecture:

{\bf Erd\H os Conjecture.} {\it Let $c$ be any constant and $x$
sufficiently large,
$$a_1<a_2<\cdot\cdot\cdot<a_t\leqslant x,~t>\log x.$$
Then there exists an integer $n$ so that the number of solutions
of $n=p+a_i$ $(p\in \mathcal{P}, 1\le i\le t)$ is greater than
$c$.}

 Erd\H{o}s \cite{Er1} himself proved this conjecture for the case $a_i=2^i$, which gives
an affirmative answer to a question of Tur\'{a}n. In a former note
\cite{Di}, the second author proved this conjecture for the case
$a_i\mid a_{i+1}$ with its quantitative form, which is a slight
generalization of Erd\H{o}s' result. In a subsequent note, the
second author and Zhou \cite{DZ} proved the conjecture for the
case $a_i=2^{p_i}$, where $p_i$ is the $i$-th prime. This case was
conjectured by the first author \cite{Ch} years ago. Shortly
after, the authors of the present note recognized that the
complete proof of Erd\H{o}s' conjecture actually follows directly
from a new achievement of the distributions of the primes
established by Maynard--Tao \cite{Ma,Ta}. We keep record here as
the closure of this longstanding conjecture.

In this note, the following general results are proved. The Erd\H
os Conjecture follows from Corollary \ref{erdosconjecture}.

\begin{theorem}\label{thm1} For any $\ell$
distinct integers $a_1, \dots , a_\ell $, there are infinitely
many positive integers $n$ such that the number of solutions of
$n=p+a_i$ $(p\in \mathcal{P}, 1\le i\le \ell )$ is greater than
$$ \frac 18 \log \ell -1.6.$$
\end{theorem}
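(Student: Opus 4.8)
The plan is to deduce Theorem~\ref{thm1} from the quantitative form of the Maynard--Tao theorem on many primes in admissible tuples, after a short sieve step that replaces $\{a_1,\dots,a_\ell\}$ by an admissible subtuple of not much smaller size.

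First I would extract an admissible subtuple. Recall that a finite set of integers is \emph{admissible} if for every prime $q$ it meets at most $q-1$ residue classes modulo $q$. Starting from $S=\{a_1,\dots,a_\ell\}$, I would run over the primes $q$ in increasing order and, at the step for $q$, delete from the current $S$ the elements lying in a least populated residue class modulo $q$ (deleting nothing if some class is already empty); this removes at most $|S|/q$ elements, and once the step for $q$ is done the set $S$ permanently meets at most $q-1$ classes modulo $q$. Running this over all primes $q\le z$ and then, if necessary, discarding further down to $\min(z,|S|)$ elements, Mertens' theorem gives a set $\cA'\subseteq\{a_1,\dots,a_\ell\}$ of some size $k$ with
$$k \gg \frac{\ell}{\log z},\qquad k\le z ,$$
which avoids a residue class modulo every prime $q\le z$ and has fewer than $q$ elements modulo every prime $q>z$; hence $\cA'$ is admissible. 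Taking $z$ of order $\ell/\log\ell$ balances the two constraints and yields $k\gg\ell/\log\ell$, so that $\log k\ge\log\ell-\log\log\ell-O(1)$. Setting $a^{\ast}=\max\cA'$ and $h_i=a^{\ast}-a_i$ for $a_i\in\cA'$ turns $\cA'$ into an admissible $k$-tuple $\mathcal H=\{h_i\}$ of nonnegative integers.

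Next I would invoke Maynard--Tao: for any admissible $k$-tuple $\mathcal H$ there are infinitely many integers $x$ for which at least $m$ of the numbers $x+h$ $(h\in\mathcal H)$ are prime, with $m\gg\log k$ and an explicit implied constant. For each such $x$, putting $n=x+a^{\ast}$ makes at least $m$ of the numbers $n-a_i=x+h_i$ $(a_i\in\cA')$ prime; since the $a_i$ are distinct this gives at least $m$ distinct solutions of $n=p+a_i$ with $p\in\mathcal P$, $1\le i\le\ell$, and letting $x\to\infty$ produces infinitely many such $n$. Together with the estimate $\log k\ge\log\ell-\log\log\ell-O(1)$ from the first step, this already yields at least $(c+o(1))\log\ell$ solutions for a positive absolute constant $c$.

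What remains is to pin down the numerical statement $\tfrac18\log\ell-1.6$. For $\ell$ large this follows from the previous paragraph once the explicit Maynard--Tao constant and the explicit Mertens loss are inserted, since $\tfrac18\log\ell$ absorbs the $\log\log\ell$-type error with room to spare; for the bounded range of $\ell$ in which $\tfrac18\log\ell-1.6$ is below $1$ (respectively $2$, $\dots$) it is enough to exhibit one (respectively two, $\dots$) solutions, which is trivial (e.g.\ $n=p+a_1$) or follows from a fixed small instance of Maynard--Tao combined with the first step. I expect this constant-tracking --- carrying the Mertens loss through the sieve, fixing the explicit constant in the Maynard--Tao bound, and disposing of the finitely many small $\ell$ --- to be the only real work; conceptually the proof is just ``pass to an admissible subtuple, then apply Maynard--Tao''.
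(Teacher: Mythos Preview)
Your proposal is correct and follows essentially the same route as the paper: sieve out one residue class per small prime to obtain an admissible subtuple of size $k\gg\ell/\log\ell$, then apply Maynard--Tao. The paper's only additions are the explicit bookkeeping you defer: it stops the sieve at the first $s$ with $\ell_s<p_{s+1}$ (your choice of $z$), invokes the quantitative form $k\log k>e^{8m+4}$ from Granville's survey together with an explicit Mertens inequality to get $\ell_s\log\ell_s\ge 0.54\,\ell$, and handles the small range by noting that $\tfrac18\log\ell-1.6\le 0$ for $\ell\le e^{12}$; your shift $h_i=a^\ast-a_i$ is harmless but unnecessary, as the paper simply works with $n-a_i$.
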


From Theorem \ref{thm1}, we immediately have the following
corollaries:

\begin{corollary} \label{erdosconjecture} Let  $x\ge 2$ and
$$a_1<a_2<\cdot\cdot\cdot<a_t\leqslant x,~t>\log x.$$
Then there exists infinitely many integers $n$ so that the number
of solutions of $n=p+a_i$ $(p\in \mathcal{P}, 1\le i\le t)$ is
greater than $$ \frac 18 \log \log x -1.6.$$
\end{corollary}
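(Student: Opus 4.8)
The plan is to obtain this as an immediate specialization of Theorem \ref{thm1}. First I would check that the hypotheses of Theorem \ref{thm1} are met: the integers $a_1<a_2<\cdots<a_t$ are pairwise distinct, and $t\ge 1$ since $x\ge 2$ forces $t>\log x\ge\log 2>0$. Applying Theorem \ref{thm1} with $\ell=t$ then produces infinitely many positive integers $n$ for which the number of solutions of $n=p+a_i$ ($p\in\mathcal{P}$, $1\le i\le t$) exceeds $\tfrac18\log t-1.6$.

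It then remains only to compare $\tfrac18\log t-1.6$ with $\tfrac18\log\log x-1.6$. Since $x\ge 2$ we have $\log x>0$, so $\log\log x$ is well defined, and the assumption $t>\log x$ together with the monotonicity of the logarithm gives $\log t>\log\log x$. Hence $\tfrac18\log t-1.6>\tfrac18\log\log x-1.6$, and the infinitely many $n$ furnished by Theorem \ref{thm1} already satisfy the conclusion of the corollary (a fortiori the weaker assertion for ``integers $n$'' rather than positive integers).

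I expect essentially no obstacle here: the corollary is a formal consequence of Theorem \ref{thm1}, the only points to verify being the elementary inequalities above — in particular that $\log\log x$ is meaningful for $x\ge 2$, which is precisely why the hypothesis is stated as $x\ge 2$ rather than merely $x>1$. All of the genuine content, including the input from the Maynard--Tao results on small gaps between primes, is already packaged inside Theorem \ref{thm1}, so nothing further is needed.
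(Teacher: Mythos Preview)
Your proposal is correct and follows essentially the same route as the paper: apply Theorem~\ref{thm1} with $\ell=t$ and then use $t>\log x$ to conclude $\tfrac18\log t-1.6>\tfrac18\log\log x-1.6$. The extra care you take in checking that $t\ge 1$ and that $\log\log x$ is well defined is fine but not strictly needed beyond what the paper records.
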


\begin{corollary} \label{infinitecor} Let $\mathcal{A}=\{a_i\}_{i=1}^{\infty}$ be an infinite
set of integers and let
$$f_\mathcal{A}(n)=\#\{(p,a):n=p+a,p\in\mathcal{P},
a\in\mathcal{A}\} ,$$ then $$\limsup_{n\to+\infty}
f_\mathcal{A}(n)=+\infty .$$
\end{corollary}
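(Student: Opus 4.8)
Granting Theorem \ref{thm1}, Corollary \ref{infinitecor} follows in one line: given any constant $C$, choose $\ell$ with $\tfrac18\log\ell-1.6>C$, pick $\ell$ distinct elements $a_1,\dots,a_\ell$ of the infinite set $\mathcal A$, and apply Theorem \ref{thm1} to obtain infinitely many $n$ with more than $C$ solutions of $n=p+a_i$ $(p\in\mathcal P,\,1\le i\le\ell)$; each such solution is a pair $(p,a)$ with $a\in\mathcal A$, so $f_{\mathcal A}(n)>C$ infinitely often and hence $\limsup_{n\to\infty}f_{\mathcal A}(n)\ge C$, and $C$ is arbitrary. So all the content is in Theorem \ref{thm1}, which I now describe how to prove. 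Rewriting $n=p+a_i$ as ``$n-a_i$ is a positive prime'', the goal becomes: for any $\ell$ distinct integers $a_1,\dots,a_\ell$, infinitely many $n$ have at least $\tfrac18\log\ell-1.6$ of $n-a_1,\dots,n-a_\ell$ prime. This is exactly what the quantitative Maynard--Tao theorem \cite{Ma,Ta} delivers --- but only for \emph{admissible} tuples $\mathcal H$ (those that, for every prime $q$, omit a residue class mod $q$; otherwise the associated singular series vanishes), and $\{-a_1,\dots,-a_\ell\}$ need not be admissible. Since admissibility is invariant under translation and under $h\mapsto-h$, the plan is to pass to a large admissible subset of $\{a_1,\dots,a_\ell\}$ and then invoke Maynard--Tao.

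The first step is the lemma that any $\ell$ distinct integers (with $\ell$ large; when $\tfrac18\log\ell-1.6<0$ the theorem is vacuous) contain an admissible subset of size at least $\lceil\sqrt\ell\,\rceil$. I would prove this by averaging over forbidden residues rather than by deleting classes: for each prime $p\le\lceil\sqrt\ell\,\rceil$ choose a residue $r_p\bmod p$, and keep those $a_i$ with $a_i\not\equiv r_p\pmod p$ for all such $p$. For a fixed $a_i$ the number of choices $(r_p)_p$ for which $a_i$ is retained is $\prod_{p\le\lceil\sqrt\ell\,\rceil}(p-1)$, so averaging over all $\prod_{p\le\lceil\sqrt\ell\,\rceil}p$ choices shows some $(r_p)_p$ retains at least $\ell\prod_{p\le\lceil\sqrt\ell\,\rceil}(1-1/p)$ of the $a_i$, which by Mertens' theorem exceeds $\sqrt\ell$ once $\ell$ is large. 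Truncating this set to exactly $\lceil\sqrt\ell\,\rceil$ elements keeps it admissible: it still omits $r_p$ mod every $p\le\lceil\sqrt\ell\,\rceil$, and for every prime $q>\lceil\sqrt\ell\,\rceil$ it has fewer than $q$ elements and so cannot cover all residues mod $q$. (The naive approach of deleting, for each small $p$, the least populated class mod $p$ fails because $\sum_{p\le\sqrt\ell}1/p\to\infty$.)

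With this admissible tuple $\mathcal H=\{-a_i:i\in S\}$ of size $k=\lceil\sqrt\ell\,\rceil$, the second step is to apply the quantitative Maynard--Tao theorem: for $k$ above an absolute constant, every admissible $k$-tuple has, for infinitely many $n$, at least $m+1$ of the $k$ numbers $n+h$ $(h\in\mathcal H)$ prime, provided $k$ exceeds an absolute constant times $m^2e^{4m}$; this inverts to $m\gg\log k$, and in fact one can take $m$ comparable to $\tfrac14\log k$ up to lower-order terms, the $\tfrac14$ reflecting the $e^{4m}$ in that threshold. This gives infinitely many $n$ with at least $m\ge\tfrac18\log\ell-1.6$ of the $n-a_i$ $(i\in S)$ prime; distinct indices $i$ give distinct primes $p=n-a_i$, hence distinct solution pairs, and $n-a_i>0$ for $n$ large, so these really are solutions of $n=p+a_i$. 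The constant $\tfrac18$ is the product of the $\tfrac12$ lost in shrinking to a square-root-size sub-tuple and the $\tfrac14$ from the crude form of Maynard's theorem, and $1.6$ safely absorbs the lower-order terms and the ``sufficiently large'' thresholds, with room to spare.

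The main obstacle is really just the admissible-subset lemma --- short, but one has to use the averaging argument, not the naive deletion --- together with tracking the explicit constants through Maynard's theorem so as to land on $\tfrac18\log\ell-1.6$. The deep analytic content, the Maynard--Tao sieve, is used entirely as a black box, which is exactly why the authors say the conjecture follows ``directly'' from it.
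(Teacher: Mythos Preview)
Your one-line deduction of the corollary from Theorem \ref{thm1} is exactly the paper's argument. The bulk of your proposal is really a sketch of Theorem \ref{thm1} itself, and there your route differs from the paper's and contains one misstatement worth flagging.

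The paper uses precisely the ``naive'' greedy deletion you dismiss: successively remove the least-populated residue class modulo $p_1,p_2,\dots$. The surviving-set bound is multiplicative, $\ell_i\ge\ell_{i-1}(1-1/p_i)$, hence $\ell_s\ge\ell\prod_{i\le s}(1-1/p_i)$, so the divergence of $\sum_p 1/p$ is irrelevant --- your averaging argument and greedy deletion yield the identical product bound. The substantive difference is that the paper does not sieve only by primes up to $\sqrt\ell$ and then truncate; it keeps deleting until the first $s$ with $\ell_s<p_{s+1}$. This self-terminating procedure, combined with the explicit Mertens bound of Lemma \ref{Mertens}, gives $\ell_s\log\ell_s\ge 0.54\,\ell$, i.e.\ an admissible subset of size $\asymp\ell/\log\ell$ rather than your $\sqrt\ell$. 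Feeding that into the explicit form $k\log k>e^{8m+4}$ of Lemma \ref{MT} yields $m>\tfrac18\log\ell-1.6$ with every constant tracked.

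Your alternative --- a $\sqrt\ell$-sized subset combined with the sharper threshold $k\gg m^2e^{4m}$ --- does recover the leading $\tfrac18$, but that sharper form in \cite{Gr} carries an unspecified implied constant, so you cannot legitimately land on the explicit $-1.6$; at best you obtain $m\ge\tfrac18\log\ell-C\log\log\ell$ for some unspecified $C$. That is ample for Corollary \ref{infinitecor}, where only $\limsup=+\infty$ is at stake, but it falls short of Theorem \ref{thm1} as stated. The paper's approach trades the sharper sieve input for a larger admissible subset and thereby keeps everything explicit.
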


\section{Proofs}

 A set $\{ b_1,\dots , b_k\}$ is
called an admissible set if there is no a fixed integer $d>1$ such
that $d\mid (n+b_1)\cdots (n+b_k)$ for all integers $n$. It is
equivalent that $\{ b_1,\dots , b_k\}$ modulo $p$ occupies at most
$p-1$ residues. We begin with the following deep result for the
distribution of the primes due to Maynard--Tao.

\begin{lemma}\label{MT}\cite[Theorem 6.2]{Gr} For any given integer $m\geqslant 2$, let $k$ be a positive integer
with $k\log k > e^{8m+4}$. For any admissible set $\{ b_1,\dots ,
b_k\}$, there are infinitely many integers $n$ such that at least
$m$ of $n+b_1,\dots ,n+b_k$ are prime numbers.
\end{lemma}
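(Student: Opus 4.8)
The plan is to prove this statement, which is the Maynard--Tao theorem in the form of \cite[Theorem 6.2]{Gr}, via the multidimensional ``GPY/Maynard'' sieve. Fix $m\ge 2$ and an admissible tuple $\{b_1,\dots,b_k\}$ with $k\log k>e^{8m+4}$. I would work inside an arithmetic progression (the ``$W$-trick''): set $W=\prod_{p\le D_0}p$ with $D_0=\log\log\log N$, and use admissibility to choose a residue class $\nu\pmod{W}$ with $\gcd(\nu+b_i,W)=1$ for every $i$, so that the shifted variables avoid all small prime factors. The central object is the weighted sum
\[
S\;=\;\sum_{\substack{N<n\le 2N\\ n\equiv\nu\pmod{W}}}\left(\sum_{i=1}^{k}\mathbf{1}[\,n+b_i\text{ prime}\,]-(m-1)\right)w_n,
\]
where the nonnegative weights are $w_n=\left(\sum_{d_i\mid n+b_i\ (1\le i\le k)}\lambda_{d_1,\dots,d_k}\right)^2$ and the $\lambda$'s are built from a fixed smooth function $F$ on the simplex $\{t_i\ge 0,\ \sum_i t_i\le 1\}$ by $\lambda_{d_1,\dots,d_k}\approx\prod_i\mu(d_i)\,F\!\left(\frac{\log d_1}{\log R},\dots,\frac{\log d_k}{\log R}\right)$, supported on squarefree $d_1\cdots d_k\le R$. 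The decisive structural point is that if $S>0$ then some $n$ in the range has $\sum_i\mathbf{1}[n+b_i\text{ prime}]\ge m$; letting $N\to\infty$ then yields infinitely many such $n$, which is exactly the claim.

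The heart of the argument is the evaluation of $S_1=\sum_n w_n$ and $S_2=\sum_n\big(\sum_i\mathbf{1}[n+b_i\text{ prime}]\big)w_n$, so that $S=S_2-(m-1)S_1$. After diagonalizing the quadratic form defining $w_n$ and carrying out the resulting Euler-product and $W$-trick manipulations, these reduce to the standard asymptotics
\[
S_1\sim C\,(\log R)^{k}\,I_k(F),\qquad S_2\sim C\,\frac{(\log R)^{k+1}}{\log N}\sum_{i=1}^{k}J_k^{(i)}(F),
\]
with the same constant $C>0$, where $I_k(F)$ denotes the integral of $F^2$ over the simplex and each $J_k^{(i)}(F)$ the integral, over the remaining $k-1$ variables, of the square of $\int_0^1 F\,dt_i$. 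Choosing $R=N^{1/4-\varepsilon}$ and invoking the Bombieri--Vinogradov theorem (level of distribution $\theta=\tfrac12$) to control the error terms in the prime-detecting sum $S_2$, one obtains $S_2/S_1\to\frac{\log R}{\log N}\cdot\frac{\sum_i J_k^{(i)}(F)}{I_k(F)}=\frac14\cdot\frac{\sum_i J_k^{(i)}(F)}{I_k(F)}$.

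It then suffices to make $\frac14\cdot\frac{\sum_i J_k^{(i)}(F)}{I_k(F)}>m-1$ for a suitable $F$, i.e.\ to show that the variational quantity $M_k:=\sup_F\frac{\sum_i J_k^{(i)}(F)}{I_k(F)}$ exceeds the threshold $4(m-1)$. For this I would insert Maynard's explicit product-type test function adapted to the simplex and carry out the calculus to obtain the lower bound $M_k>\tfrac12\log(k\log k)$ once $k$ is large. The hypothesis $k\log k>e^{8m+4}$ then gives $M_k>\tfrac12(8m+4)=4m+2>4(m-1)$, whence $S>0$ for all large $N$, completing the proof.

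The main obstacle is the asymptotic evaluation of $S_2$: detecting primes forces one to open the indicator through a truncated sieve and to interchange it with the double sum defining $w_n$, after which the off-diagonal contributions must be bounded on \emph{average} over the moduli --- this is precisely where Bombieri--Vinogradov and the choice $R=N^{1/4-\varepsilon}$ are indispensable, and it is the step that pins down the constant $\tfrac14$ and hence the shape $e^{8m+4}$ of the hypothesis. The secondary technical point is the explicit construction of $F$ delivering $M_k\gg\log(k\log k)$; everything else is bookkeeping with Euler products and the singular series, which the $W$-trick renders harmless.
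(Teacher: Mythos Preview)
The paper does not prove this lemma at all: it is quoted verbatim as \cite[Theorem~6.2]{Gr} and used as a black box in the proof of Theorem~\ref{thm1}. So there is no ``paper's own proof'' to compare against.

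Your proposal is a reasonable high-level outline of the Maynard--Tao sieve that underlies the cited result, and the architecture is right: the $W$-trick to neutralize small primes, the nonnegative weights $w_n$ built from a smooth $F$ on the simplex, the decomposition $S=S_2-(m-1)S_1$, the asymptotics $S_1\sim C(\log R)^k I_k(F)$ and $S_2\sim C(\log R)^{k+1}(\log N)^{-1}\sum_i J_k^{(i)}(F)$, Bombieri--Vinogradov at level $\theta=\tfrac12$ forcing $R=N^{1/4-\varepsilon}$ and hence the factor $\tfrac14$, and finally a lower bound on $M_k=\sup_F \sum_i J_k^{(i)}(F)/I_k(F)$ of the right order to convert the hypothesis $k\log k>e^{8m+4}$ into $M_k>4(m-1)$. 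One caution on the last step: the bound actually established in \cite{Gr} (following Maynard) is of the shape $M_k\ge \log k - 2\log\log k - 2$, which is asymptotically stronger than the $\tfrac12\log(k\log k)$ you quote; either suffices for the arithmetic you need, but if you intend to reproduce the precise constant $e^{8m+4}$ you should track the inequality in \cite{Gr} exactly rather than your paraphrase. In any case, you are not reconstructing anything the present paper does --- you are sketching the proof of the imported theorem.
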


\begin{lemma}\cite[Lemma 3]{ChSun}\label{Mertens} We have
$$\prod_{3\le p\le x} \left( 1-\frac 1p\right)^{-1}\le 0.923 \log x, \quad x\ge 74,$$
where the product is taken over all primes $p$ with $3\le p\le x$.
\end{lemma}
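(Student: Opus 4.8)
The plan is to reduce this one-sided estimate to an explicit form of Mertens' third theorem and to clear a short initial range by direct computation. Write $F(x)=\prod_{3\le p\le x}(1-1/p)^{-1}$. Restoring the omitted factor at $p=2$ gives the exact identity
$$F(x)=\frac12\prod_{p\le x}\left(1-\frac1p\right)^{-1},$$
so it is enough to bound the complete Euler product and then halve. Since $\prod_{p\le x}(1-1/p)^{-1}\sim e^{\gamma}\log x$, the limiting value of $F(x)/\log x$ is $\tfrac12 e^{\gamma}=0.8905\ldots$, which sits below the target $0.923$; the entire content of the lemma is to make this effective all the way down to $x=74$.

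For large $x$ I would invoke an effective Mertens inequality such as the Rosser--Schoenfeld bound
$$\prod_{p\le x}\left(1-\frac1p\right)^{-1}<\frac{e^{\gamma}\log x}{1-\tfrac{1}{2\log^2 x}}\qquad(x>285).$$
Together with the identity, the desired inequality $F(x)\le 0.923\log x$ reduces to
$$\frac{e^{\gamma}}{2\bigl(1-\tfrac{1}{2\log^2 x}\bigr)}\le 0.923,$$
equivalently to a lower bound on $\log^2 x$ of roughly $14.2$. This holds with wide room for every $x>285$, and moreover the available slack only grows with $x$, so the asymptotic range is settled cleanly.

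It remains to treat $74\le x\le 285$. Here $F$ is a step function that increases only at primes, while $0.923\log x$ is continuous and strictly increasing; hence on each interval between consecutive primes the inequality is hardest to satisfy at the \emph{left} endpoint. The range therefore collapses to a finite computation: verify $F(73)\le 0.923\log 74$ on the first piece $[74,79)$ and $F(q)\le 0.923\log q$ at each prime $q$ with $79\le q\le 283$. Every $F(q)$ here is an explicit rational number, and evaluating them confirms the inequality throughout.

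The step I expect to be the real obstacle is the tightness of the constant at the bottom of the range. The asymptotic constant $0.8905$ and the target $0.923$ differ by under four percent, and in fact the inequality just fails at $x=73$ (where $F(73)=3.966\ldots$ exceeds $0.923\log 73=3.960\ldots$) while holding by a hair at $x=74$; this is precisely why the hypothesis is $x\ge 74$. Consequently the finite verification near the left endpoint must be carried out with genuine numerical care, and the explicit error term in the Mertens bound must be strong enough to close the gap at the transition point $x=285$. Sharper effective Mertens estimates (for instance those of Dusart) could lower that threshold and shorten the finite check, but the three-part scheme---exact reduction to the full product, an effective bound for large $x$, and a monotonicity-driven finite check for small $x$---would remain the same.
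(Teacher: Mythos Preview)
The paper does not supply its own proof of this lemma: it is quoted verbatim as \cite[Lemma~3]{ChSun} and used as a black box in the proof of Theorem~\ref{thm1}. There is therefore no in-paper argument to compare your proposal against.

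That said, your outline is the standard and correct way to establish such an explicit Mertens-type bound: reduce to the full Euler product via the exact identity $F(x)=\tfrac12\prod_{p\le x}(1-1/p)^{-1}$, apply the Rosser--Schoenfeld lower bound for $\prod_{p\le x}(1-1/p)$ (valid for $x\ge 285$) to handle the tail, and cover the finite initial segment $74\le x\le 285$ by evaluating $F$ at the primes in that window and comparing against $0.923\log q$. Your observation that the constant is essentially sharp at $x=74$ (and fails at $x=73$) correctly identifies where the threshold comes from. The only caveat is that the ``finite verification'' is asserted rather than displayed; in a self-contained write-up one would tabulate the values of $F(q)$ and $0.923\log q$ for the primes $73\le q\le 283$, but as a proof strategy nothing is missing.
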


\begin{proof}[Proof of Theorem \ref{thm1}] If $\ell \le e^{12}$, then
$$\frac 18 \log \ell -1.6\le 0,$$
and Theorem \ref{thm1} is trivial. In the following, we assume
that $\ell >e^{12}$.

Let $p_i$ be the $i$-th prime. Assume that $a_1, \dots , a_\ell $
are $\ell$ distinct integers. For $p_1$, one of residues modulo
$p_1$ contains at most $\lfloor \ell /p_1\rfloor $ of $a_1, \dots
, a_\ell $. So at least $\ell -\lfloor \ell /p_1\rfloor $ of $a_1,
\dots , a_\ell $ occupy at most $p_1-1$ residues  modulo $p_1$.
Let $\ell_0=\ell$ and $\ell_1=\ell -\lfloor \ell /p_1\rfloor $.
Without loss of generality, we assume that $a_1, \dots ,
a_{\ell_1} $ occupy at most $p_1-1$ residues modulo $p_1$.
Similarly,  without loss of generality, we may assume that $a_1,
\dots , a_{\ell_2} $ occupy at most $p_2-1$ residues  modulo
$p_2$, where  $\ell_2=\ell_1 -\lfloor \ell_1 /p_2\rfloor $.
Continuing this process, at the $t$-th step, we may assume that
$a_1, \dots , a_{\ell_t} $ occupy at most $p_t-1$ residues  modulo
$p_t$, where  $\ell_t=\ell_{t-1} -\lfloor \ell_{t-1} /p_t\rfloor
$. Since $\ell \ge \ell_1 \ge \cdots $ and $p_1<p_2<\cdots $,
there exists $t$ with $\ell_t <p_{t+1}$. Let $s$ be the least
integer with $\ell_s <p_{s+1}$. It is clear that $\{ a_1, \dots ,
a_{\ell_s}\}$ is an admissible set. Let $m$ be largest integer
with $\ell_s\log \ell_s
> e^{8m+4}$.  If $m\ge 2$, then by Lemma \ref{MT}, there are infinitely many
integers $n$ such that at least $m$ of $n-a_{1},\dots ,
n-a_{\ell_s}$ are prime numbers. Since there are infinitely many
primes, it follows that there are infinitely many integers $n$
such that at least one of $n-a_{1},\dots , n-a_{\ell_s}$ is prime
number. So the conclusion is also true for $m\le 1$.

Now we establish an explicit relation between $\ell$ and $m$.

Since $$\ell_{i+1}=\ell_i - \left\lfloor \frac{\ell_i
}{p_{i+1}}\right\rfloor \ge \ell_i - \frac{\ell_i
}{p_{i+1}}=\ell_i \left( 1-\frac 1{p_{i+1}}\right),\quad i=0, 1,
\dots ,$$ it follows from the definition of $s$ that
\begin{eqnarray}\label{eq1}p_{s+1}&>&\ell_s\ge \ell_{s-1}\left( 1 - \frac
1{p_{s}}\right)\ge \cdots  \nonumber \\
&\ge & \ell \left( 1-\frac 1{p_1}\right)
\cdots \left( 1-\frac 1{p_s}\right)\nonumber \\
&>&e^{12} \left( 1-\frac 1{p_1}\right) \cdots \left( 1-\frac
1{p_s}\right).\end{eqnarray} Noting that
$$p_{101}<e^{12} \left( 1-\frac 1{p_1}\right) \cdots \left( 1-\frac
1{p_{100}}\right),$$ for $1\le i\le 100$ we have
\begin{eqnarray*}p_{i+1}\le  p_{101}&<&e^{12} \left( 1-\frac 1{p_1}\right) \cdots \left( 1-\frac
1{p_{100}}\right)\\
&\le &  e^{12} \left( 1-\frac 1{p_1}\right) \cdots \left( 1-\frac
1{p_{i}}\right).\end{eqnarray*} It follows from \eqref{eq1} that
$s> 100$. So $p_s\ge p_{101}=547$. Thus, by Lemma \ref{Mertens},
$$\ell_s\ge  \ell \left( 1-\frac 1{p_1}\right) \cdots \left(  1-\frac
1{p_s}\right) \ge \frac \ell 2\cdot \frac{1}{0.923 \log
p_s}=\frac{\ell }{1.846 \log p_s}.$$ By the definition of $s$,
$p_s\le \ell_{s-1}$. Thus,
$$\ell_s \ge \left( 1-\frac 1{p_s}\right) \ell_{s-1}\ge  \left( 1-\frac 1{p_s}\right)
p_{s}=p_s-1.$$ It follows that
$$\ell_s \ge \frac{\ell }{1.846 \log p_s} \ge \frac{\log 546}{1.846 \log 547} \frac{\ell }{\log (p_s-1)}>\frac{0.54 \ell}{\log \ell_s}.$$
 So $\ell_s \log \ell_s \ge 0.54 \ell $. In view of the definition of $m$,
$$e^{8m+12}\ge \ell_s\log \ell_s \ge 0.54 \ell.$$
So
$$m\ge \frac 18 \log \ell -\frac{12}8+\frac{\log 0.54}8>\frac 18 \log \ell -1.6.$$

This completes the proof of Theorem \ref{thm1}.
\end{proof}

\begin{proof}[Proof of Corollary \ref{erdosconjecture}]
 Assume that
$$1\le a_1<\cdots <a_t\le x, \quad t>\log x.$$
By Theorem \ref{thm1}, there are infinitely many positive integers
$n$ such that the number of solutions of $n=p+a_i$ $(p\in
\mathcal{P}, 1\le i\le t )$ is greater than
$$\frac 18\log t-1.6>\frac 18 \log\log x-1.6.$$

This completes the proof of Corollary \ref{erdosconjecture}.
\end{proof}

\begin{proof}[Proof of Corollary \ref{infinitecor}]

By Theorem \ref{thm1}, there is a positive integers $n$ such that
the number of solutions of $n=p+a_i$ $(p\in \mathcal{P}, 1\le i\le
\ell )$ is greater than $\frac 18 \log \ell -1.6$. That is,
$f_\mathcal{A}(n)\ge \frac 18 \log \ell -1.6$. Now Corollary
\ref{infinitecor} follows immediately.
\end{proof}

\section{Remarks}

It is known that there is a positive proportion of positive odd
numbers  that  can be represented as $p+2^k$ with $k\in
\mathbb{N}$ and $p\in \mathcal{P}$ (See Romanoff \cite{Romanoff} )
and there is an arithmetical progression of positive odd numbers
none of which can be represented as $p+2^k$ with $k\in \mathbb{N}$
and $p\in \mathcal{P}$ (see Erd\H os \cite{Er1}).

Since one can take $k\le cm^2 e^{4m}$ for some positive constant
$c$ in Lemma \ref{MT} (see \cite{Gr} ), it follows that $\frac 18$
in Theorem \ref{thm1} and Corollary \ref{erdosconjecture} can be
improved to any constant less than $\frac 14$ for all sufficintly
large $\ell$ and $x$.

\section*{Acknowledgments}
The first author was supported by the National Natural Science
Foundation of China, Grant No. 12171243. The second author was
supported by the Natural Science Foundation of Jiangsu Province of
China, Grant No. BK20210784. He was also supported by the
foundations of the projects ``Jiangsu Provincial
Double--Innovation Doctor Program'', Grant No. JSSCBS20211023 and
``Golden  Phenix of the Green City--Yang Zhou'' to excellent PhD,
Grant No. YZLYJF2020PHD051.


\begin{thebibliography}{KMP}

\bibitem{ChSun} Y.-G. Chen, X.-G. Sun, On Romanoff's constant, J.
Number Theory, {\bf 106} (2004), 275--284.

\bibitem{Ch} Y.-G. Chen, Romanoff theorem in a sparse set, Sci. China Math., {\bf 53} (2010), 2195--2202.

\bibitem{Di} Y. Ding, A note on a conjecture of Erd\H{o}s, (preprint).

\bibitem{DZ} Y. Ding, G.-L. Zhou, Some application of the admissible sets, (preprint).

\bibitem{Er1} P. Erd\H{o}s, On the integers of the form $2^k+p$ and some related problems, Summa Brasil. Math., {\bf 2} (1950), 113--123.

\bibitem{Gr} A. Granville, Primes in intervals of bounded length, Bull. Amer. Math. Soc., {\bf 52} (2015), 171--222.


\bibitem{Ma} J. Maynard, Small gaps between primes, Ann. of Math., {\bf 181} (2015), 383--413.

\bibitem{Romanoff} N.P. Romanoff, \"Uber einige S\"atze der additiven
Zahlentheorie, Math. Ann. 57 (1934) 668--678.

\bibitem{Ta} D.H.J. Polymath, Variants of the Selberg sieve, and bounded intervals containing many primes, Res. Math. Sci.,
{\bf 1} (2014), 1--83.

\end{thebibliography}
\end{document}